\documentclass[12pt]{amsart}

\usepackage{psfrag}
\usepackage{color}

\usepackage{graphicx,graphics}
\usepackage{fullpage,amssymb,amsfonts,amsmath,amstext,amsthm,amscd,enumerate,verbatim,tikz}
\usepackage[T1]{fontenc}
\usetikzlibrary{matrix,arrows}

\usepackage[all]{xy} \usepackage{url}

\begin{document}
\newcommand{\note}[1]{\marginpar{\tiny #1}}
\newtheorem{theorem}{Theorem}[section]
\newtheorem{result}[theorem]{Result}
\newtheorem{fact}[theorem]{Fact}
\newtheorem{conjecture}[theorem]{Conjecture}
\newtheorem{definition}[theorem]{Definition}
\newtheorem{lemma}[theorem]{Lemma}
\newtheorem{proposition}[theorem]{Proposition}
\newtheorem{remark}[theorem]{Remark}
\newtheorem{corollary}[theorem]{Corollary}
\newtheorem{facts}[theorem]{Facts}
\newtheorem{question}[theorem]{Question}
\newtheorem{props}[theorem]{Properties}

\newtheorem{ex}[theorem]{Example}

\newcommand{\notes} {\noindent \textbf{Notes.  }}
\renewcommand{\note} {\noindent \textbf{Note.  }}
\newcommand{\defn} {\noindent \textbf{Definition.  }}
\newcommand{\defns} {\noindent \textbf{Definitions.  }}
\newcommand{\x}{{\bf x}}
\newcommand{\z}{{\bf z}}
\newcommand{\B}{{\bf b}}
\newcommand{\V}{{\bf v}}
\newcommand{\T}{\mathcal{T}}
\newcommand{\Z}{\mathbb{Z}}
\newcommand{\Hp}{\mathbb{H}}
\newcommand{\D}{\mathbb{D}}
\newcommand{\R}{\mathbb{R}}
\newcommand{\N}{\mathbb{N}}
\renewcommand{\B}{\mathbb{B}}
\newcommand{\C}{\mathbb{C}}
\newcommand{\dt}{{\mathrm{det }\;}}
 \newcommand{\adj}{{\mathrm{adj}\;}}
 \newcommand{\0}{{\bf O}}
 \newcommand{\av}{\arrowvert}
 \newcommand{\zbar}{\overline{z}}
 \newcommand{\htt}{\widetilde{h}}
\newcommand{\ty}{\mathcal{T}}
\renewcommand\Re{\operatorname{Re}}
\renewcommand\Im{\operatorname{Im}}
\newcommand{\diam}{\operatorname{diam}}
\newcommand{\dist}{\text{dist}}
\newcommand{\ds}{\displaystyle}
\numberwithin{equation}{section}
\newcommand{\cN}{\mathcal{N}}
\renewcommand{\theenumi}{(\roman{enumi})}
\renewcommand{\labelenumi}{\theenumi}
\newcommand{\inte}{\operatorname{int}}

\date{\today}
\title{Julia sets and wild Cantor sets}
\author{Alastair Fletcher}
\address{Department of Mathematical Sciences, Northern Illinois University, DeKalb, IL 60115-2888. USA}
\email{fletcher@math.niu.edu}

\author{Jang-Mei Wu}
\address{Department of Mathematics, University of Illinois,  1409 West Green Street, Urbana-Champaign, IL 61822, USA}
\email{wu@math.uiuc.edu}
\thanks{ J.-M.W. is partially supported by the National Science Foundation grant DMS-1001669.}

\subjclass[2010]{Primary 30D05; Secondary 37F50, 54C50}
\keywords{uniformly quasiregular maps, Julia sets, wild Cantor sets, Antoine's necklace}

\begin{abstract}
There exist uniformly quasiregular maps $f:\R^3 \to \R^3$ whose Julia sets are wild Cantor sets.
\end{abstract}

\maketitle

\section{Introduction}\label{sec:introduction}

The most direct generalization of the iteration of holomorphic functions in the plane to higher real dimensions is the iteration of uniformly quasiregular mappings in $\R^n$. Informally speaking, quasiregular mappings allow a bounded amount of distortion. Uniformly quasiregular mappings, abbreviated to uqr mappings, have a uniform bound for the distortion of all the iterates. These were introduced by Iwaniec and Martin \cite{IM} who showed that there are direct analogues of the Julia set and Fatou set for uqr mappings and that $\R^n = J(f) \cup F(f)$. It is known that certain structures can arise as Julia sets of uqr mappings:
\begin{itemize}
\item all of $\R^n$, arising from Latt\`es type examples \cite{May},
\item the unit sphere $S^{n-1} \subset \R^n$ arising from uqr versions of power mappings \cite{May},
\item an $(n-1)$-ball $B^{n-1} \times \{ 0 \}$ in $\R^n$ arising from uqr versions of Chebyshev polynomials \cite{May},
\item tame Cantor sets arising from conformal trap methods \cite{IM,MP}.
\end{itemize}
Here, a tame Cantor set $E\subset \R^n$ is one for which there exists a homeomorphism $\psi :\R^n \to \R^n$ such that $\psi(E)$ is a usual ternary Cantor set contained in a line. A Cantor set which is not tame is called wild. The first example of a wild Cantor set was Antoine's necklace \cite{Ant}, the construction of which we recall below. In the context of quasiregular mappings, Heinonen and Rickman \cite{HeiRic} constructed quasiregular mappings in $\R^3$ that branch on a wild Cantor set. In this paper, we prove the following theorem.

\begin{theorem}\label{thm:main}
Let $m\in \N$ be the square of a sufficiently large even integer. Then there exist an Antoine's necklace $X\subset \R^3$ of multiplicity $m$ and a uniformly quasiregular map $f:\R^3 \to \R^3$ whose Julia set $J(f)$ is $ X$. Further, $J(f)$ is the closure of the repelling periodic points of $f$.
\end{theorem}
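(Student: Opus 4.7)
The plan is to realize $X$ as a Cantor repeller for a uqr self-map of $\mathbb{R}^3$, following the conformal trap philosophy of Iwaniec--Martin and Mayer, but with balls replaced by a carefully chosen linked chain of solid tori, as in Heinonen--Rickman. First I would fix a solid torus $T_0\subset\mathbb{R}^3$ and, inside it, place $m$ pairwise disjoint smaller solid tori $T_1^{(1)},\ldots,T_1^{(m)}$ arranged as a standard Antoine chain (so that the chain is linked and non-contractible in $T_0$). The crucial requirement is that there exist Möbius maps $\phi_i$ of $\mathbb{R}^3$ with $\phi_i(T_0)=T_1^{(i)}$; this is a genuine constraint, since conformal maps in $\mathbb{R}^3$ are Möbius by Liouville, but it can be arranged by choosing the chain so that consecutive tori are related by a rotation around the core circle of $T_0$ composed with scaling and reflection. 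The hypothesis that $m$ is the square of a sufficiently large even integer is exactly what buys enough room (in terms of the permissible geometry of the Antoine chain and of the degree parameters below) to do this symmetrically.

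Next I would define $f$ on $\bigcup_i T_1^{(i)}$ by setting $f|_{T_1^{(i)}}=\phi_i^{-1}$, so that $f$ is conformal (Möbius) on each small torus and maps it onto $T_0$ as an expansion. The inverse branches $\phi_i$ form an IFS whose attractor is exactly $X=\bigcap_n\bigcup_{|w|=n}\phi_{w_1}\circ\cdots\circ\phi_{w_n}(T_0)$, which by construction is an Antoine's necklace of multiplicity $m$. I would then extend $f$ across the complementary ``gap'' region $T_0\setminus\bigcup_i T_1^{(i)}$ by a quasiregular branched cover onto an annular region avoiding $T_0$; this is the classical Berstein--Edmonds / Rickman style extension problem on a compact $3$-manifold with boundary, solvable once the boundary degree data is compatible, which is where the numerical condition on $m$ enters. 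Finally I would extend $f$ to all of $\mathbb{R}^3\setminus T_0$ quasiregularly so that every such point is attracted, under iteration, to a fixed quasiregular sink (e.g., the point $\infty$ or an attracting fixed point in the complement of $T_0$).

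To finish, I would check three things. First, $f$ is uniformly quasiregular: on a forward orbit that remains in the tori $T_1^{(i)}$ forever, $f^n$ is a composition of Möbius maps and hence conformal, so the distortion is~$1$; an orbit that eventually leaves $\bigcup_i T_1^{(i)}$ picks up distortion only during finitely many iterates before being swept into the attracting basin, and this number of ``bad'' steps is uniformly bounded, so $K(f^n)$ is bounded independently of~$n$. Second, $J(f)=X$: points in $X$ have iterates that are not normal (expansion forces sensitive dependence), while points outside $X$ land in the attracting basin after finitely many steps, hence $\{f^n\}$ is normal there. Third, repelling periodic points are dense in $X$: any periodic word $\overline{i_1\cdots i_p}$ in the symbolic coding gives a fixed point of $\phi_{i_1}\circ\cdots\circ\phi_{i_p}$ inside $T_1^{(i_1)}$, and since this composition is a strict Möbius contraction the corresponding periodic point of $f$ is repelling; such periodic codes are dense in the full shift, and the coding map is a homeomorphism onto $X$. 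The main obstacle is Step~1 together with the extension in Step~2: one must simultaneously realize the Antoine chain by Möbius images of $T_0$ (so the dynamics on $X$ is conformal, which is the sole input that makes uniform quasiregularity work) and match up boundary degrees so that a quasiregular extension across the gap region exists. The divisibility hypothesis on $m$ is precisely what makes both compatibilities available.
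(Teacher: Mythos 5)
Your outline shares the paper's broad strategy (realize $X$ as a conformal-similarity IFS attractor, make $f$ conformal on the small tori, push escaping orbits out to a uqr power-map regime, and invoke Berstein--Edmonds to fill a cobordism), but there is a genuine gap in the middle step that the paper resolves with a separate, substantial construction you have skipped.

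You propose to extend $f$ across the gap region $T_0 \setminus \bigcup_i T_1^{(i)}$ to an annular region avoiding $T_0$ ``by Berstein--Edmonds.'' That theorem, as stated (Theorem~\ref{thm:be}), only produces branched covers $W \to W'$ when the target $W'$ is a $3$-sphere with the interiors of disjoint polyhedral $3$-balls removed; in particular, every boundary component of the target must be a $2$-sphere. Here the natural target would have $\partial T_0$ (a torus) as one of its boundary components, so the theorem does not apply, and no general extension result is available when the target boundary components have mixed genus. This is not a detail: the interesting topology of the Antoine chain lives exactly in this region, and the paper handles it by an explicit BLD branched cover built by hand in Section~\ref{sec:basic-cover} --- compose the degree-$m/2$ winding map $\omega$ (which folds the $m$ tori onto the Bing double $\omega(T_1)\cup\omega(T_2)$), the degree-$2$ quotient $q$ by the $\pi$-rotation involution $\iota$ about the $x_1$-axis (which unknots the Bing double into a single unknotted torus in a ball), and a PL homeomorphism $\zeta$ chosen so that $F = \zeta\circ q\circ\omega$ restricts to $\varphi_j^{-1}$ on each $\partial T_j$. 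Only after this explicit map is in hand does the paper apply Berstein--Edmonds, and then only across $B_0\setminus\inte X_0 \to B_{-1}\setminus\inte B_0$, where both target boundary components are spheres.

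Two smaller points. First, your outer extension ``so that every point is attracted to $\infty$'' is not automatically uqr: composing arbitrarily many quasiregular iterates would blow up the distortion, so one must splice in an actual uqr map on the outside. The paper uses Mayer's degree-$d^2$ power map $g$, and this is where the hypothesis $m = d^2$ really comes from: the degree of $f$ on the necklace side is $m$ (one preimage per torus) and the degree of $g$ is $d^2$, and these must agree on the sphere where they are joined. Your proposal gestures at ``degree parameters'' but never pins this down. Second, your density-of-repelling-periodic-points argument via the symbolic coding is fine and in fact slightly more self-contained than the paper's (which cites Siebert for density and then checks repulsion via the topological fixed-point criterion of Hinkkanen--Martin--Mayer); either route works.
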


Antoine's necklace is constructed by an iterative procedure involving a link of $m$ solid tori contained in a solid torus. This even integer $m$ is called the multiplicity of the necklace. If $m$ is sufficiently large, then the necklace can be taken to be geometrically self-similar.
The idea in the proof of this theorem is to interpolate between conformal similarities arising from the necklace construction and the uqr power mapping of Mayer \cite{May} by first constructing an explicit branched cover, then applying an extension theorem of Berstein and Edmonds \cite{BerEdm}.

The construction here cannot happen in the plane since there are no wild Cantor sets in  $\R^2$.
Blankinship \cite{Bla} extended Antoine's construction to produce wild Cantor sets in higher dimensions.
It is unknown whether a basic branched cover analogous to that in Section \ref{sec:basic-cover} exists in higher dimensions and whether Berstein and Edmonds' theorem can be generalized to dimension four or higher.
For these reasons, we restrict to dimension three in this paper.

It is known that the Julia set of a uqr mapping is the closure of the periodic points \cite{Siebert}, but it is still an open question whether it is the closure of the repelling periodic points, as it is in the holomorphic setting. In \cite{Fle}, it is shown that this question has an affirmative answer when $J(f)$ is a tame Cantor set, and the final assertion of Theorem \ref{thm:main} shows that there is an affirmative answer for at least some wild Cantor sets.

Tame Cantor sets arising as Julia sets of uqr mappings were used in \cite{Fle} to construct quasiregular versions of Poincar\'e linearizers where the fast escaping set forms a structure called a spider's web. The natural question of whether there exist uqr mappings with wild Cantor sets arose from this work.

The paper is organized as follows. In section \ref{sec:pre} we recall the definition of quasiregular mappings and state results we will need for our construction. In section \ref{sec:Cantor}, the construction of Antoine's necklace is recalled and, in particular, a geometrically self-similar version. The construction and verification of the properties of the uqr map constructed for the proof of Theorem \ref{thm:main} is contained in sections \ref{sec:basic-cover} and \ref{sec:f}.
\medskip

\noindent{\bf{Acknowledgements}.} {The authors would like to thank Pekka Pankka and Kai Rajala for their helpful remarks on the manuscript, and Julie Kaufman for drafting the figures.}

\section{Preliminaries}\label{sec:pre}

We denote by $B(x,r)$ the Euclidean ball centered at $x\in \R^n$ of radius $r>0$ and by $S(x,r)$ the boundary of $B(x,r)$.

\subsection{Quasiregular maps}

A mapping $f:E \rightarrow \R^{n}$ defined on a domain $E \subseteq \R^{n}$ is called quasiregular if $f$ belongs to the Sobolev space $W^{1}_{n, loc}(E)$ and there exists $K \in [1, \infty)$ such that
\begin{equation} \label{eq2.1}
\av f'(x) \av ^{n} \leq K J_{f}(x)
\end{equation}
almost everywhere in $E$. Here $J_{f}(x)$ denotes the Jacobian determinant of $f$ at $x \in E$.
Informally, a quasiregular mapping sends infinitesimal spheres to infinitesimal ellipsoids with bounded eccentricity. We refer to Rickman's monograph \cite{Ric} for more details on quasiregular mappings.

A mapping $f:E \rightarrow \R^{n}$ defined on a domain $E \subseteq \R^{n}$ is said to be of bounded length distortion (BLD) if $f$ is sense-preserving, discrete, open and satisfies
\[
\ell(\gamma)/L \leq \ell(f\circ \gamma) \leq L\, \ell(\gamma)
\]
for some $L\ge 1$ and all paths $\gamma$ in $E$, where $\ell(\cdot)$ denotes the length of a path. BLD maps were introduced by  Martio and V\"ais\"al\"a \cite{MarVai}; see also \cite{HeiRic-BLD}. They
form a strict subclass of quasiregular maps.

\subsection{Uqr mappings}

The composition of two quasiregular mappings is again a quasiregular mapping, but the dilatation typically increases.
A quasiregular mapping $f$ is called uniformly quasiregular, or uqr, if \eqref{eq2.1} holds uniformly in $K$ over all iterates of $f$. If $f:\R^n \to \R^n$ is uqr, then the Fatou set is
\[F(f) = \{ x\in \R^n : \text{ there is a neighborhood } U \ni x \text{ such that } (f^m |_U)_{m=1}^{\infty} \text{ forms a normal family} \},\]
and the Julia set $J(f) = \R^n \setminus F(f)$, see \cite{IM}. The escaping set of a quasiregular mapping is
\[ I(f) = \{ x\in \R^n : |f^n(x)| \to \infty \}.\]
The following result is contained in \cite{FleNic}, and characterizes the Julia set of uqr mappings in terms of the escaping set:

\begin{theorem}[Lemma 5.2, \cite{FleNic}]
\label{thm:fn}
Let $f:\R^n \to \R^n$ be uqr. Then $J(f) = \partial I(f)$.
\end{theorem}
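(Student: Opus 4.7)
The plan is to prove the two inclusions $\partial I(f)\subseteq J(f)$ and $J(f)\subseteq \partial I(f)$ separately, with the second being substantially harder.

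For $\partial I(f)\subseteq J(f)$ I would run the standard non-normality argument. Given $x\in\partial I(f)$ and any neighborhood $U$ of $x$, pick $y\in U\cap I(f)$, so $f^n(y)\to\infty$, and $z\in U\setminus I(f)$, so some subsequence $f^{n_k}(z)$ stays bounded. If $\{f^n|_U\}$ were a normal family, some subsequential locally-uniform limit (in the spherical metric) would be either a finite-valued quasiregular mapping---contradicting $f^n(y)\to\infty$---or identically $\infty$---contradicting the boundedness of $f^{n_k}(z)$. Hence the iterates cannot be normal near $x$, so $x\in J(f)$.

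For the reverse inclusion I would split the target into $J(f)\cap\inte I(f)=\emptyset$ and $J(f)\subseteq\overline{I(f)}$. The first is equivalent to $\inte I(f)\subseteq F(f)$: on any open $U\subseteq I(f)$ one has $f^n\to\infty$ pointwise, and the uniform distortion bound on the iterates together with Miniowitz's quasiregular analogue of Montel's theorem (equivalently, the spherical equicontinuity supplied by Rickman's value distribution theory) upgrades pointwise convergence to locally uniform convergence to $\infty$, placing $U$ in $F(f)$.

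The main obstacle is the remaining inclusion $J(f)\subseteq\overline{I(f)}$: in every neighborhood $U$ of an arbitrary $x\in J(f)$ one must produce a single point whose \emph{entire} forward orbit escapes. Since $\{f^n|_U\}$ is non-normal, Miniowitz's theorem forbids $\bigcup_n f^n(U)$ from omitting any set of positive conformal $n$-capacity, so the forward images of $U$ must eventually hit every prescribed nonempty open set. Assuming $\inte I(f)$ is nonempty---typically justified by $\infty$ being a super-attracting fixed point of the uqr map, which supplies an open subset of the basin---I would fix an open $V\subseteq\inte I(f)\subseteq F(f)$ and choose $n$ with $f^n(U)\cap V\ne\emptyset$; the openness of $f^n$ as a quasiregular mapping then yields $y\in U$ with $f^n(y)\in V\subseteq I(f)$, so $y\in I(f)$ as required. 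The delicate step is precisely this upgrade from ``some iterate takes a large value somewhere'' to ``a single orbit escapes,'' which hinges on the nonemptiness of an open escaping set and the open-map property of quasiregular mappings.
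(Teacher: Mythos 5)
The paper does not prove this statement; it is cited verbatim from Fletcher--Nicks \cite{FleNic} (their Lemma 5.2), so there is no in-paper proof to compare against. Judged on its own terms, your three-inclusion decomposition $\partial I(f)\subseteq J(f)$, $J(f)\cap\inte I(f)=\emptyset$, $J(f)\subseteq\overline{I(f)}$ is the natural route and your first inclusion is correct: the dichotomy that a locally uniform spherical limit of $K$-quasiregular maps on a domain is either finite-valued quasiregular or identically $\infty$ is the quasiregular Hurwitz-type theorem, and the contradiction with the two test points $y\in I(f)$, $z\notin I(f)$ goes through.

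The middle step is where the argument has a genuine gap as written. Miniowitz's Montel analogue does not ``upgrade pointwise convergence to locally uniform convergence''; it asserts normality for a family of $K$-quasiregular maps omitting a fixed set of $q=q(n,K)$ points. You never exhibit such an omitted set for $\{f^n|_U\}$ with $U\subseteq I(f)$. The fix is not hard but must be said: $I(f)$ is completely invariant, so $f^n(U)\subseteq I(f)$ for all $n$, hence the family omits $\R^n\setminus I(f)$; one then needs to know this complement is large enough (at least $q$ points, or positive capacity), which is a nontrivial side fact requiring its own justification. Alternatively, in the polynomial-type setting relevant to this paper, $\infty$ is a (super)attracting fixed point and $I(f)$ coincides with its basin $A(\infty)$, which is automatically open and contained in $F(f)$; that argument is cleaner and avoids Miniowitz entirely, but it is a different mechanism and does not cover the general statement as cited.

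Your final step is structured correctly---blow-up near $J(f)$, hit a prescribed open subset of $\inte I(f)$, pull back by the open map $f^n$ and use complete invariance of $I(f)$---but it rests on $\inte I(f)\neq\emptyset$, which you flag as an assumption rather than prove. For uqr maps of polynomial type this follows from the attracting fixed point at $\infty$, but for a general uqr $f\colon\R^n\to\R^n$ (the hypothesis in the cited lemma) nonemptiness of the escaping set is the quasiregular analogue of Eremenko's theorem and is itself a substantive result, not a throwaway remark. So your proposal captures the correct skeleton of the argument but leaves two load-bearing lemmas---normality of iterates on $\inte I(f)$, and $I(f)\neq\emptyset$ (indeed $\inte I(f)\neq\emptyset$)---stated rather than established.
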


\subsection{Uqr power mappings}

There are higher dimensional uniformly quasiregular counterparts to power mappings, constructed by Mayer \cite{May}.

\begin{theorem}[Theorem 2, \cite{May}]
\label{thm:m}
For every $d\in \N$ with $d>1$, there is a uqr map $g:\overline{\R^3} \to \overline{\R^3}$ of degree $d^2$, with Julia set $J(g) = S(0,1)$ and whose Fatou set consists of $B(0,1)$ and $\overline{\R^3} \setminus \overline{B(0,1)}$.
\end{theorem}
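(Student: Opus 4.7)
The plan is to adapt the holomorphic construction $z \mapsto z^d$ by replacing $\exp$ with a Zorich-type quasiregular map. First I would construct $Z:\R^3 \to \R^3 \setminus \{0\}$, quasiregular, by tiling the plane $\{x_3 = 0\}$ into unit squares, sending each square by a bi-Lipschitz map onto one of the two hemispheres of $S(0,1)$ with alternating orientations, and then extending in the $x_3$-direction by $(x_1,x_2,x_3) \mapsto e^{x_3} Z(x_1,x_2,0)$. The reflections between adjacent tiles make the resulting map quasiregular on all of $\R^3$; it is periodic under a discrete group $\Gamma$ generated by translations and reflections, and it satisfies $Z(\{x_3 > 0\}) = \R^3 \setminus \overline{B(0,1)}$ and $Z(\{x_3 < 0\}) = B(0,1) \setminus \{0\}$.

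Next, take the linear map $A(x) = dx$. Because $A$ preserves the tiling and commutes with the generators of $\Gamma$ in the appropriate sense, the equation $g \circ Z = Z \circ A$ unambiguously defines a quasiregular map $g:\R^3 \setminus \{0\} \to \R^3 \setminus \{0\}$ with the same distortion constant as $Z$. The isolated points $0$ and $\infty$ are removable singularities in the quasiregular category, so $g$ extends to $\overline{\R^3}$ by $g(0)=0$ and $g(\infty)=\infty$. The degree count follows by counting preimages in a single fundamental domain: multiplication by $d$ in the two periodic directions contributes a factor of $d^2$. Uniform quasiregularity is automatic from iterating the semiconjugacy to $g^n \circ Z = Z \circ A^n$, since $A^n$ is conformal and $Z$ has distortion bounded independently of $n$; hence the distortion of each $g^n$ is controlled by $K(Z)^2$.

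The remaining task is to identify the Julia and Fatou sets. Points in $B(0,1) \setminus \{0\}$ lift under $Z$ to $\{x_3 < 0\}$, where $A^n$ sends them to $-\infty$ in the $x_3$-coordinate; pushing forward through $Z$ shows $g^n \to 0$ locally uniformly on $B(0,1)$, so $B(0,1) \subset F(g)$. An analogous lift of $\overline{\R^3}\setminus\overline{B(0,1)}$ into $\{x_3 > 0\}$ places this set in the basin of $\infty$, hence in $F(g)$. On $S(0,1)$, every neighborhood meets both basins, so normality fails; equivalently, by Theorem \ref{thm:fn}, $S(0,1) \subset \partial I(g) = J(g)$. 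Combined with the previous two containments, this forces $J(g) = S(0,1)$ and $F(g) = B(0,1) \cup (\overline{\R^3} \setminus \overline{B(0,1)})$.

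The main obstacle is the explicit construction of the Zorich map $Z$ and the verification that it is globally quasiregular with the correct equivariance, so that $A$ actually descends through $Z$. Once this combinatorial and metric step is in place, all remaining assertions (degree, uniform quasiregularity, and the dynamical description) follow formally from the semiconjugacy.
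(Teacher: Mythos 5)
Your proposal reconstructs Mayer's own argument for Theorem 2 of \cite{May}: an automorphic Zorich map $Z$ semiconjugating the conformal dilation $A(x)=dx$ to a degree-$d^2$ uqr power map, with uniform quasiregularity inherited from the conformality of $A^n$ and the Julia/Fatou split read off from the two invariant half-spaces $\{x_3\gtrless 0\}$. This is essentially the same route as the cited proof; the only point to state more carefully is that well-definedness of $g$ requires $A\Gamma A^{-1}\subset\Gamma$ (normalization, not commutation) for the Zorich automorphism group $\Gamma$, which holds because $d$ is an integer.
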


In particular, for any $r>0$,
\begin{equation}
\label{eq:mayer}
g(B(0,r)) = B(0,r^d).
\end{equation}

\subsection{Extending branched coverings from the boundary of $3$-manifolds}

We will need the following result of Berstein and Edmonds \cite{BerEdm} on extending branched coverings over PL cobordisms.

\begin{theorem}[Theorem 6.2, \cite{BerEdm}]
\label{thm:be}
Let $W$ be a connected, compact, oriented PL $3$-manifold in some $\R^m$ whose boundary $\partial W$ consists of two components $M_0$ and $M_1$ with the induced orientation. Let $W'=N\setminus (\inte
B_0 \cup \inte B_1)$ be an oriented PL $3$-sphere $N$ in $\R^4$ with two disjoint closed polyhedral $3$-balls removed, and have the induced orientation on its boundary.
Suppose that $\varphi_i :M_i^2 \to \partial B_i$  is a sense-preserving oriented branched covering of degree $n\geq 3$, for each $i=0,1$. Then there exists  a sense-preserving PL branched cover $\varphi : W \to W'$ of degree $n$ that extends $\varphi_0$ and $\varphi_1$.

\end{theorem}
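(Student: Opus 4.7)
The plan is to interpret degree-$n$ PL branched covers of 3-manifolds via their branch sets together with monodromy representations. A PL branched cover $\psi:M^3\to N^3$ of degree $n$ is determined, up to PL equivalence over $N$, by a 1-dimensional polyhedral branch set $\Gamma\subset N$ together with a transitive homomorphism $\mu:\pi_1(N\setminus\Gamma)\to S_n$ in which each meridian of $\Gamma$ is sent to a cyclic permutation whose length records the local degree. The analogous picture on a surface is a finite branch set together with a Hurwitz system of cycles multiplying to the identity. I would first repackage $\varphi_0$ and $\varphi_1$ as branch sets $\Gamma_i\subset\partial B_i$ with monodromies $\mu_i$, and then recast the theorem as the problem of producing a branch set $\Gamma\subset W'$ meeting the boundary exactly in $\Gamma_0\cup\Gamma_1$ and a monodromy $\mu:\pi_1(W'\setminus\Gamma)\to S_n$ restricting to $\mu_i$ on each $\partial B_i$, such that the induced covering 3-manifold is PL homeomorphic to $W$.

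Constructing the target branch set $\Gamma\subset W'$ is the easier half. Since $W'$ is PL homeomorphic to $S^2\times[0,1]$ and has trivial fundamental group, one can realize $\Gamma$ as a disjoint union of vertical arcs $\{p_j\}\times[0,1]$ pairing the points of $\Gamma_0$ with those of $\Gamma_1$, possibly together with a few closed branch loops. For a pairing to be compatible with $\mu_0$ and $\mu_1$, the cycles assigned to paired meridians must coincide up to conjugation in $S_n$; using the hypothesis $n\geq 3$, one has enough flexibility in $S_n$ to perform Hurwitz moves on $\varphi_0$ and $\varphi_1$ and, if needed, to introduce canceling pairs of extra simple branch points in order to realize a valid pairing. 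At this stage $\mu$ extends over $W'\setminus\Gamma$ because $\pi_1(W')=1$, so only the boundary compatibility must be checked.

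The substantive step is to guarantee that the covering 3-manifold over $W'$ produced by $(\Gamma,\mu)$ is homeomorphic to the prescribed $W$, not merely to some compact oriented PL 3-manifold with the correct boundary. I would do this by decomposing $W$ relative to a product collar of $M_0$ into 1-handles and 2-handles, and extending the branched cover handle by handle. Across a 1-handle, a new branch arc in $W'$ accommodates a change in monodromy and allows the cover over the attaching region to be matched with the required cover over the co-core; across a 2-handle, a resulting relation in $S_n$ must be killed, which is again achieved by attaching canceling pairs of branch arcs and redistributing cycles. The bookkeeping reduces to a relative form of the stable equivalence of branched covers of 3-manifolds.

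The main obstacle is this handle-by-handle realization, i.e.\ the relative stable-equivalence statement: showing that any two sense-preserving degree-$n$ branched covers with prescribed boundary behavior differ by stabilization by canceling branch arcs and conjugation of monodromies. The condition $n\geq 3$ is essential throughout, because $S_n$ then contains noncommuting transpositions and has enough room to realize arbitrary cyclic monodromies subject to the product-equals-identity Hurwitz constraints; for $n=2$ the monodromy factors through $\Z/2$ and the flexibility needed to absorb the topology of $W$ is simply not available.
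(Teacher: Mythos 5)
This statement is quoted from Berstein--Edmonds and the paper does not prove it; it is used as a black box, so there is no in-paper argument to compare against. Evaluating your proposal on its own terms:

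Your general framework --- encoding a PL branched cover by its branch set $\Gamma$ together with a monodromy $\pi_1(W'\setminus\Gamma)\to S_n$, and then pushing the problem through a handle decomposition of $W$ relative to $M_0$ --- is the right one and is essentially the strategy Berstein and Edmonds use: their paper proceeds from Hurwitz data for branched covers of surfaces, moves for such data, and extension over handles, with the degree restriction $n\geq 3$ appearing exactly because $S_n$ then has enough noncommuting elements to realize the necessary moves. So the plan is pointed in the correct direction.

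The proposal, however, stops at the threshold of the actual theorem. The ``relative stable-equivalence'' assertion that you identify as the main obstacle \emph{is} the theorem: showing that an arbitrary compact oriented $W$ with the given boundary behaviour can be swept out by a sequence of Hurwitz moves and branch-arc creations/annihilations over $S^2\times[0,1]$ is precisely what requires the Berstein--Edmonds machinery, and summarizing it as ``bookkeeping'' understates where all the work lives. Two more specific issues. First, the ``easier half'' is not as described: if $\Gamma$ were a union of vertical arcs $\{p_j\}\times[0,1]$, the resulting cover of $S^2\times[0,1]$ would be a product over $[0,1]$, forcing $W\cong M_0\times[0,1]$; a general $W$ needs branch arcs with local maxima and minima in the $[0,1]$ direction (and arcs with both endpoints on one boundary sphere), which is exactly the geometric shadow of the handle attachments, not an afterthought. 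Second, the appeal to $\pi_1(W')=1$ to conclude that $\mu$ extends is a slip --- what matters is $\pi_1(W'\setminus\Gamma)$, and the compatibility of $\mu_0$ and $\mu_1$ on the two boundary components is a genuine constraint that must be mediated by the interior branch structure, not by simple connectivity of $W'$. You gesture at both of these, so they are not misconceptions, but the proof content of the theorem is not actually supplied.
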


This theorem of Berstein and Edmonds has been generalized to branched covers $\partial W \to \partial W'$ between boundaries of connected, compact, oriented $3$-manifolds $W$ and $W'$, when $\partial W$ has  $p\ge 2$ connected components and $W'$ is a PL $3$-sphere with the interiors of $p$ disjoint closed $3$-balls removed. This generalization was proved first by Heinonen and Rickman \cite{HeiRic-BLD} for mappings whose  degrees are large multiples of $3$, and later in \cite{PRW} for all degrees $\ge 3$. This theorem is false when the degree is $2$ by an example of Fox, see \cite{BerEdm-degree}.

When the components of $\partial W'$ of the target manifold $W'$ have varying topological types, there seems to be no general procedure for extending branched covers from the boundary to the inside.

\section{Wild Cantor sets}\label{sec:Cantor}

We briefly recall the construction of the wild Cantor set in $\R^3$ constructed by Antoine in 1921 \cite{Ant};
see the book of Rolfsen (\cite{Rolfsen}, p. 73) for an illustrated description.

\subsection{Antoine's necklace}
Let $X_0 \subset \R^3$ be a solid torus  and let $m\ge 4$ be a positive even integer. Choose mutually disjoint solid tori $X_{1,1},\ldots, X_{1,m}$ contained in the interior of $X_0$ so that  $X_{1,i}$ and $X_{1,j}$ are linked if and only if $|i-j| \equiv 1 (\text{mod}\,m)$ and, when linked, they form a Hopf link. Fix homeomorphisms $\varphi_j : X_0 \to X_{1,j}$ for $j=1,\ldots,m$, and
define
\[ X_1 = \bigcup_{j=1}^m X_{1,j} = \bigcup_{j=1}^m \varphi_i(X_0).\]
We then inductively define
\[ X_{k+1} = \bigcup_{j=1}^m \varphi_j(X_k),\]
for $k\geq 1$. At the $k$'th stage $X_k$ will consist of $m^k$ tori $X_{k,1}, \ldots, X_{k,m^k}$. Clearly, $X_{k+1} \subset X_k$.
An Antoine's necklace of multiplicity $m$ is defined as
\[ X = \bigcap_{k=1}^{\infty} X_k.\]
If $c_k$ is the maximum diameter of any torus in $X_k$, then we require that $c_k \to 0$ as $k\to \infty$ in order to obtain that $X$ is a Cantor set.
The set $X$ is topological self-similar. However, for our purposes we need a geometrically self-similar necklace.

\subsection{A geometrically self-similar necklace}\label{sec:necklace-geom}

Let $m$ be a large even integer,  $p_1,\ldots, p_m$ be $m$ equally spaced points on the unit circle
$\tau_0= \{(x_1,x_2,0) \colon x_1^2 + x_2^2=1\}$, and $T_0$ be the solid torus $\{x\in \R^3\colon \dist(x,\tau_0) \leq 8/m \}$ with core $\tau_0$.
When $m$ is sufficiently large, circles $\tau_j, j=1,\ldots, m,$ in $\R^3$ with centers $p_j$ and radius $4/m$ may be fixed so that
\begin{itemize}
\item $\tau_i$ and $\tau_j$ are linked in $\R^3$ if and only if $|i-j| \equiv 1 (\text{mod}\,m)$;
\item $\rho (\tau_j)=\tau_{j+2}$ for $ j=1,\ldots, m-2,$ $\rho (\tau_{m-1})=\tau_1$, and $\rho(\tau_m)= \tau_2$, where $\rho$ is the rotation about  the $x_3$-axis by an angle $4\pi/m$,
\[
\rho\colon (r,\theta,x_3)\mapsto (r,\theta+\frac{4\pi}{m},x_3);
\]
\item $\tau_1$ and $\tau_m$ are linked with the $x_1$ axis as shown in Figure \ref{Hopflink5}, and a rotation through angle $\pi$ about the $x_1$ axis sends $\tau_1$ onto $\tau_m$ and vice versa.
\end{itemize}

Fix $m$ sense preserving similarities $\varphi_j,  j=1,\ldots, m,$ of $\R^3$  with
$\varphi_j(\tau_0)=\tau_j$ and set $T_j=\varphi_j (T_0)$. Then  $T_j$, for $j=1,\ldots, m,$ are mutually disjoint tori in the interior of $ T_0$. A geometric self-similar necklace $X$ may be obtained by setting
$X_0=T_0$ and $X_{1,j}= T_j$ for $j=1,\ldots,m$ in the above construction.

\begin{figure}[h!]
\begin{center}
\includegraphics[scale=1.0]{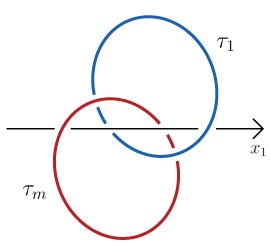}
\caption{}
\label{Hopflink5}
\end{center}
\end{figure}

\section{A basic covering map}\label{sec:basic-cover}

Towards the proof of Theorem \ref{thm:main}, we construct a BLD degree $m$ branched covering map
\[F \colon\, T_0 \setminus \inte \left ( \bigcup_{j=1}^m\, T_j \right )\,  \longrightarrow \, \overline{B(0,2)} \setminus \inte (T_0)\]
satisfying $F|\partial T_j\colon \partial T_j \to \partial T_0 = \varphi_j^{-1}$ for the tori $T_0, T_1,\ldots, T_m$ fixed
 in Section \ref{sec:necklace-geom}.

\begin{figure}[h!]
\begin{center}
\includegraphics[scale=1.0]{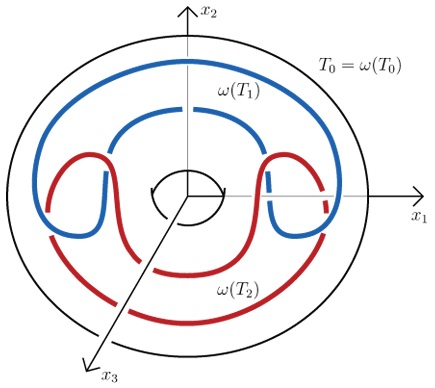}
\caption{}
\label{Bingdoublinglink}
\end{center}
\end{figure}

Let $\omega \colon \R^3 \to \R^3$ be the degree $m/2$ winding map
\[
\omega \colon (r,\theta, x_3)\mapsto (r, \theta m/2, x_3).
\]
Then $\omega\colon T_0\to T_0$ is an unbranched cover that maps all $T_j$ with odd indices to $\omega(T_1)$ and all $T_j$ with even indices to $\omega (T_2)$. Moreover $\omega(T_1)$ and $\omega (T_2)$ are linked inside $T_0$ as in Figure \ref{Bingdoublinglink} and by construction are symmetric under the involution $\iota$ that corresponds to the rotation about the $x_1$-axis by an angle $\pi$
\[
\iota \colon (x_1,x_2,x_3) \mapsto (x_1, -x_2,-x_3).
\]
That is, we have $\iota (\omega (T_1))=\omega (T_2)$ and $\iota (\omega (T_2))=\omega (T_1)$.

The quotient  $q \colon T_0 \to T_0/\langle \iota \rangle$ is a degree 2 sense preserving map, under which $q(\omega (T_1))=q(\omega (T_2))$
 is a torus unknotted  in the $3$-cell $q(T_0)$;
see for example \cite[p. 294]{Rolfsen}.
To obtain a BLD branched cover from  $T_0$ onto $\overline{B(0,2)}$ which unlinks the tori $\omega (T_1)\cup \omega (T_2)$, we consider a PL version of $q$. Give  $T_0$ a $C^1$-triangulation $g\colon |U| \to T_0$  in the sense of \cite[p. 81]{Munkres} by a simplicial complex $U$ in $\R^3$, that respects the involution $\iota|T_0$ and of which $g^{-1}(\omega(T_1) \cup \omega(T_2))$ is a subcomplex.
Identify $q(T_0)$ with a simplicial complex $V$ in $\R^3$ of which $q(\omega (T_1))$ is a subcomplex and that the quotient  $q \circ g\colon U \to q(T_0)$ is simplicial. Under these identifications, $q$ is a BLD map.

We may assume, after a simplicial subdivision of  $V$,  that $\overline{B(0,2)}$ has a $C^1$-triangulation $h\colon |V| \to \overline{B(0,2)}$ under which  $h^{-1}(T_0)$ is  a subcomplex and  the map $ q \circ \omega \circ \varphi_1 \circ h|h^{-1}(T_0) \colon h^{-1}(T_0) \to q(\omega(T_1))$ is simplicial. Since $q \circ \omega \circ \varphi_1|T_0$ and  $h^{-1}|T_0$ are ambient isotopic in $V$, there exists  a homeomorphism $\zeta \colon \overline{B(0,2)} \to V $ such that $ \zeta|T_0 = q \circ \omega \circ \varphi_1 $ and $\zeta=h^{-1}$ on $\partial B(0,2)$, and that $\zeta^{-1} \circ h \colon V \to V $ is PL by an isotopy extension theorem \cite[p. 136]{Hudson}.

It follows that
$\zeta\circ q \circ \omega|T_j=\varphi_j^{-1}, j=1, \ldots, m,$ are similarities.
Then $F=\zeta \circ q \circ \omega$ is the BLD degree $m$ branched covering map in the claim.

\section{Proof of Theorem \ref{thm:main}}\label{sec:f}

To construct the uqr map $f$ for Theorem \ref{thm:main}, we require $m =d^2$ to be the square of an even integer sufficiently large so that the construction of Antoine's necklace allows conformal similarities $\varphi_1,\ldots,\varphi_m$. Let $X_0=T_0$, $X_{1,j}= T_j$ for $j=1,\ldots,m$  and $X$ the geometrically self-similar necklace of multiplicity $m$ from Section \ref{sec:necklace-geom}.

Set $B_0=B(0,2)$, $B_{-1}=B(0, 2^d)$ and write
$\R^3$ as  disjoint unions, one for the domain and one for the target, as follows:
\[
\R^3= X_1 \cup (X_0\setminus X_1) \cup (B_0\setminus X_0) \cup (\R^3\setminus B_0)
\]
and
\[
\R^3= X_0 \cup (B_0\setminus X_0) \cup (B_{-1}\setminus B_0) \cup (\R^3\setminus B_{-1}).
\]
The uqr map $f$ will be defined in four matching parts.

\begin{enumerate}[(i)]

\item Set $f \colon \overline{X_0\setminus X_1} \to \overline{B_0\setminus X_0}$  to be the degree $m$ basic branched covering map $F$ from Section \ref{sec:basic-cover}.

\item Extend $F$ to $X_1$ by defining  $f|X_1$ to be $\varphi_j^{-1}:X_{1,j} \to X_0$  for every $j=1,\ldots,m$.

\item Define $f\colon \R^3\setminus \text{int}( B_0) \to \R^3\setminus \text{int}(B_{-1})$ to be the restriction of $g$ to $\R^3\setminus \text{int}( B_0)$, recalling the uqr power mapping $g$ of degree $m$ from Theorem \ref{thm:m}. By \eqref{eq:mayer}, $S(0,2)$ is mapped onto $S(0,2^d)$ by $g$. We remark also that $g|_{S(0,2)}$ is orientation preserving by \cite[Proposition 5.2]{May}.

\item Since $f|\partial B_0\colon \partial B_0\to \partial B_{-1}$ is a BLD degree $m$  branched cover and $f|\partial X_0\colon \partial X_0\to \partial B_0$ is an $m$-fold cover by similarities, we may extend the boundary map to a BLD degree $m$
    branched cover $f\colon B_0 \setminus \text{int} X_0 \to B_{-1}\setminus \text{int} B_0$
by Theorem \ref{thm:be}, the extension theorem of Berstein and Edmonds. It is understood here that
$C^1$-triangulation has been carried out on $B_0 \setminus \text{int} X_0$ and $ B_{-1}\setminus \text{int} B_0$ before applying Theorem \ref{thm:be}.

\end{enumerate}

This completes the construction of a quasiregular map $f\colon \R^3 \to \R^3$. To finish the proof of Theorem \ref{thm:main}, we show that $f$ has the required properties in the following lemmas.

\begin{lemma}
The map $f$ is a uniformly quasiregular mapping of polynomial type.
\end{lemma}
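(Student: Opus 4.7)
The plan is to verify quasiregularity of $f$, then to bound the distortion of all iterates uniformly by exploiting the orbit structure imposed by the necklace, and finally to identify $f$ as being of polynomial type because it agrees with Mayer's power map outside $B_0$.

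For quasiregularity, each of the four defining pieces is individually quasiregular: the basic cover $F$ and the Berstein--Edmonds extension are BLD, each similarity $\varphi_j^{-1}$ is conformal, and $g$ is uqr by Theorem \ref{thm:m}. Since the pieces agree continuously across the measure-zero interfaces $\partial X_{1,j}$, $\partial X_0$, and $\partial B_0$, a standard gluing / removable-singularity argument for quasiregular maps (see \cite{Ric}) yields $f \in W^{1,3}_{\mathrm{loc}}(\R^3)$ with $|f'|^3 \le K_1 J_f$ almost everywhere, where $K_1 = \max(K_F, K_{\mathrm{ext}}, K_g)$.

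For uniform quasiregularity, the key observation is that the recursion $X_{k+1} = \bigcup_j \varphi_j(X_k)$ forces $f$ to map each component of $X_k$ onto $X_{k-1}$ by a similarity for every $k \ge 1$. Consequently any forward orbit under $f$ either stays in the Cantor set $X$, where every iterate is a composition of similarities and hence conformal, giving $K(f^n, x) = 1$; or, if it escapes $X$, it first descends through the nested levels by a finite run of similarities, then undergoes at most one application of $F$ (landing in $B_0 \setminus X_0$), at most one application of the extension map (landing in $B_{-1} \setminus B_0$), and is thereafter iterated only by $g$. Submultiplicativity of distortion under composition, combined with the iterate-uniform bound $K_g$ coming from uqr of $g$, then gives
\[ K(f^n, x) \le K_F \cdot K_{\mathrm{ext}} \cdot K_g \]
independently of $n$ and $x$, so $f$ is uqr. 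Since $f$ agrees with $g$ on $\R^3 \setminus \inte B_0$, formula \eqref{eq:mayer} yields $|f(x)| = |x|^d \to \infty$ as $|x| \to \infty$, so $f$ extends continuously to $\overline{\R^3}$ by $f(\infty) = \infty$, which is the polynomial-type property.

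The main obstacle is the orbit-structure step: its conclusion relies on the observation that the two BLD (non-conformal) ingredients $F$ and the extension map each contribute at most once along any given forward orbit, which in turn hinges on the nested self-similar structure of Antoine's necklace. Once that is in place, the uniform bound on $K(f^n)$ follows without any accumulation, and both the Sobolev-gluing verification and the polynomial-type identification are essentially immediate from the construction.
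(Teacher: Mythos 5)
Your proof is correct and follows essentially the same route as the paper's: both hinge on the observation that any forward orbit passes through finitely many conformal similarities, then at most the two non-conformal pieces ($F$ and the Berstein--Edmonds extension), and is thereafter iterated only by the uqr power map $g$, giving a uniform bound on $K(f^n)$. The only cosmetic differences are that you spell out the gluing step for plain quasiregularity and justify polynomial type via $f(\infty)=\infty$ rather than via finite degree, both of which are equivalent to what the paper does.
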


\begin{proof}
Let $x\in \R^3$. If the orbit of $x$ always remains in $X_1$, then since $f|_{X_1}$ is conformal the dilatation of $f^k$ at $x$ will always be $1$.

Otherwise, after finitely many iterations through at worst conformal maps and two quasiregular maps, $f^{k_0}(x)\in \R^n \setminus B(0,2^d)$. From this point on, $f$ agrees with the uqr power map of degree $m$ and hence the dilatation will remain bounded. In short, along the orbit of a point which starts in $X_0$ but eventually leaves $X_0$, $f$ consists of finitely many conformal maps, two quasiregular mappings and then a uqr mapping.
Hence the dilatation of $f^k$ remains uniformly bounded on $\R^3$ as $k\to \infty$.

Since $f$ has finite degree $m$, it is of polynomial type.
\end{proof}

\begin{lemma}
The Julia set of $f$ is equal to $X$.
\end{lemma}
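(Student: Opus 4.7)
The plan is to use Theorem \ref{thm:fn} to identify $J(f) = \partial I(f)$, so that the whole task reduces to showing $I(f) = \R^3 \setminus X$. I would establish this in two halves: forward invariance of $X$ (giving $X \cap I(f) = \emptyset$), and a shell-by-shell pullback argument (giving $\R^3 \setminus X \subset I(f)$).

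For the invariance half, note that by the recursive construction $X \cap X_{1,j} = \varphi_j(X)$ for every $j = 1, \dots, m$. Since part (ii) of the construction gives $f|_{X_{1,j}} = \varphi_j^{-1}$, we get $f(X \cap X_{1,j}) = X$, hence $f(X) = X$. In particular the orbit of any point of $X$ stays in the bounded set $X \subset X_0$, so $X \cap I(f) = \emptyset$.

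For the escape half, I would peel off the shells one at a time. On $\R^3 \setminus \overline{B_0}$ the map $f$ agrees with Mayer's uqr power map $g$, and since $S(0,2)$ lies strictly outside the unit sphere, (\ref{eq:mayer}) applied iteratively gives $g^k(S(0,r)) = S(0, r^{d^k})$ for $r \geq 2$, whence $\R^3 \setminus B_0 \subset I(f)$. Next, by parts (iv) and (i) of the construction, $f(B_0 \setminus X_0) \subset B_{-1} \setminus B_0$ and $f(X_0 \setminus X_1) \subset B_0 \setminus X_0$; since $f^{-1}(I(f)) \subset I(f)$, this forces $(B_0 \setminus X_0) \cup (X_0 \setminus X_1) \subset I(f)$. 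To continue inductively, I would use the key identity
\[
f\bigl((X_k \setminus X_{k+1}) \cap X_{1,j}\bigr) \;=\; X_{k-1} \setminus X_k,
\]
which follows from $X_k \cap X_{1,j} = \varphi_j(X_{k-1})$ together with $f|_{X_{1,j}} = \varphi_j^{-1}$. By induction $X_k \setminus X_{k+1} \subset I(f)$ for all $k \geq 0$, and combined with $\R^3 \setminus X_0 \subset I(f)$ this gives $\R^3 \setminus X = (\R^3 \setminus X_0) \cup \bigcup_{k \geq 0}(X_k \setminus X_{k+1}) \subset I(f)$.

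The conclusion is purely topological: $X$ is a Cantor set, hence closed with empty interior, so $\partial X = X$. Therefore $J(f) = \partial I(f) = \partial(\R^3 \setminus X) = \partial X = X$. I do not expect a serious obstacle here because the four pieces of $f$ were designed precisely so that the dynamics factors through these shells; the only bookkeeping that needs care is the self-similar identity $X_k \cap X_{1,j} = \varphi_j(X_{k-1})$, which is immediate from the definition $X_{k+1} = \bigcup_j \varphi_j(X_k)$ together with the disjointness of the $X_{1,j}$.
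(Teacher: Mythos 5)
Your proposal is correct and follows the same overall strategy as the paper: both identify the escaping set with $\R^3 \setminus X$ and then invoke Theorem \ref{thm:fn} to conclude $J(f) = \partial I(f) = X$. You simply spell out the two halves (forward invariance $f(X)=X$ via $X \cap X_{1,j} = \varphi_j(X)$, and the shell-by-shell pullback showing $X_k \setminus X_{k+1} \subset I(f)$) in more detail than the paper, which asserts these facts as immediate from the construction and finishes by noting that neighborhoods of points of $X$ meet $\partial X_k \subset I(f)$.
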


\begin{proof}
By the construction of $f$, if the orbit of any point leaves $X_0$, then it is contained in the escaping set $I(f)$ of $f$. Also by construction, the set of points which do not leave $X_0$ under iteration of $f$ are exactly the points in $X$. If $x\in X$, then any sufficiently small neighbourhood of $x$ will intersect the boundary of $X_k$ for some $k$. Since by construction $\partial X_k \subset I(f)$,
we obtain $X = \partial I(f)$. By Theorem \ref{thm:fn}, $\partial I(f) = J(f)$ and hence $J(f) =X$.
\end{proof}

\begin{lemma}
The Julia set $J(f)$ is the closure of the repelling periodic points.
\end{lemma}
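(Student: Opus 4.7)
The plan is to exploit the symbolic dynamics that the iterated function system $\{\varphi_j\}_{j=1}^m$ imposes on $X$, together with the key observation that on each $X_{1,j}$ the map $f$ coincides with the expanding conformal similarity $\varphi_j^{-1}$. Since the preceding lemma identifies $J(f)=X$, it is enough to show that repelling periodic points of $f$ are dense in $X$.

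The first step is to manufacture a repelling periodic point for every finite word $(j_1,\ldots,j_k)\in\{1,\ldots,m\}^k$. The composition $\Phi:=\varphi_{j_1}\circ\cdots\circ\varphi_{j_k}$ is a contracting similarity of $\R^3$ whose image is the level-$k$ component $X_{k,j_1\ldots j_k}=\varphi_{j_1}\circ\cdots\circ\varphi_{j_k}(X_0)$. Banach's fixed point theorem supplies a unique fixed point $p$ of $\Phi$, and since $\Phi^n(X_0)\subset X_{nk}$ for every $n$, this fixed point lies in $X\cap X_{k,j_1\ldots j_k}$. Tracing the $f$-orbit, the fact that $p\in X_{1,j_1}$ gives $f(p)=\varphi_{j_1}^{-1}(p)=\varphi_{j_2}\circ\cdots\circ\varphi_{j_k}(p)\in X_{1,j_2}$, and inductively $f^k(p)=\Phi^{-1}(p)=p$. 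Near $p$ the iterate $f^k$ equals the expanding similarity $\Phi^{-1}$ of ratio $1/\lambda>1$, where $\lambda<1$ is the contraction factor of $\Phi$, so $p$ is manifestly repelling.

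Density then follows quickly. Given $x\in X$ with address $(i_1,i_2,\ldots)$ and $\epsilon>0$, choose $k$ with $c_k<\epsilon$, where $c_k\to 0$ is the maximum diameter of a component of $X_k$. The repelling periodic point associated to the truncated word $(i_1,\ldots,i_k)$ lies in the same level-$k$ torus $X_{k,i_1\ldots i_k}$ as $x$, hence within $\epsilon$ of $x$. Together with the containment of repelling periodic points in $J(f)$, this gives $J(f)=X=\overline{\{\text{repelling periodic points}\}}$.

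I do not foresee a substantive obstacle: the similarity structure of $\{\varphi_j\}$, the property $c_k\to 0$, and the identification $f|_{X_{1,j}}=\varphi_j^{-1}$ are all hard-wired into the construction in Sections \ref{sec:Cantor} and \ref{sec:f}. The only point requiring a line of care is the meaning of \emph{repelling} in the uqr setting, but since $f^k$ is a conformal expansion on a neighborhood of $p$ inside $X_1$, any reasonable definition (derivative norm exceeding one, topological expansion of small balls, or the existence of a local linearization) applies immediately.
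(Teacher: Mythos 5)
Your proof is correct, but it takes a genuinely different route from the paper's. The paper invokes Siebert's theorem that periodic points are dense in $J(f)$, then shows that \emph{every} periodic point $x_0 \in J(f)$ is repelling: since $x_0 \in \operatorname{int}(X_{p+1,k}) \subset \operatorname{int}(X_{1,j})$ with $f^p \colon X_{p+1,k} \to X_{1,j}$ injective, $x_0$ is repelling in the topological sense of Hinkkanen--Martin--Mayer. You instead bypass Siebert's theorem entirely and directly manufacture a dense set of repelling periodic points as the fixed points of the contracting similarities $\Phi = \varphi_{j_1}\circ\cdots\circ\varphi_{j_k}$, verifying that $f^k$ coincides with the expanding similarity $\Phi^{-1}$ near each such fixed point. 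Your argument is more self-contained (no appeal to \cite{Siebert} or \cite{HMM}) and gives explicit conformal repulsion rather than topological repulsion, but it leans more heavily on the rigid conformal IFS structure; the paper's route, while citing more, establishes the slightly stronger statement that \emph{all} periodic points in $J(f)$ are repelling, not merely a dense family of constructed ones. Both are sound; your symbolic-dynamics construction is a legitimate and arguably more elementary alternative.
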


\begin{proof}
By a result in Siebert's thesis \cite{Siebert}, see also \cite[Theorem 4.1]{Bergweiler} and the discussion preceding it, the periodic points are dense in $J(f)$. Let $x_0$ be a periodic point in $J(f)$ of period $p$. Then, recalling the construction of the Antoine's necklace $X$, there exist integers $j,k$ such that
\[ x_0 \in \operatorname{int}(X_{1,j}) \subset X_1 \quad \text{ and } \quad x_0 \in \operatorname{int}(X_{p+1,k}) \subset X_{p+1} \cap X_{1,j}.\]
In particular, $X_{p+1,k} \subset \operatorname{int}(X_{1,j})$. By the construction of $f$, $f^p:X_{p+1,k} \to X_{1,j}$ is injective. Hence by the topological definition of fixed points, see \cite[p.90]{HMM}, $x_0$ is a repelling fixed point of $f^p$. Finally, by \cite[Proposition 4.6]{HMM}, $x_0$ is a repelling periodic point of $f$.
\end{proof}

This completes the proof of Theorem \ref{thm:main}.


\begin{thebibliography}{99}


\bibitem{Ant} L. Antoine, Sur l'homeomorphisme de deux figures et leurs voisinages,
{\it Journal Math Pures et appl.}, {\bf 4} (1921), 221--325.

\bibitem{Bergweiler}
W. Bergweiler,
Iteration of quasiregular mappings, 
{\it Comput. Methods Funct. Theory}, {\bf 10} (2010), no. 2, 455-481.

\bibitem{BerEdm-degree}
I. Berstein, A.L. Edmonds, The degree and branch set of a branched covering,{\it Invent. math.}, {\bf 45}, no.3 (1978), 213-220.

\bibitem{BerEdm}
I. Berstein, A. L. Edmonds, On the construction of branched coverings of low-dimensional manifolds,
{\it Trans. Amer. Math. Soc.}, {\bf 247} (1979), 87--124.

\bibitem{Bla} W. A. Blankinship, Generalization of a construction of Antoine,
{\it Ann. of Math.}, {\bf 53}, no.2 (1951), 276--297.


\bibitem{Fle}
A. Fletcher, Poincar\'e linearizers in higher dimensions, to appear in {\it Proc. Amer. Math. Soc.}

\bibitem{FleNic}
A. Fletcher, D. A. Nicks, Quasiregular dynamics on the $n$-sphere,
{\it Erg. Th. Dyn. Sys.}, {\bf 31} (2011), 23--31.

\bibitem{HeiRic}
J. Heinonen, S. Rickman,
Quasiregular maps $S^3 \to S^3$ with wild branch sets,
{\it Topology}, {\bf 37}, no.1 (1998), 1--24.

\bibitem{HeiRic-BLD}
J. Heinonen, S. Rickman, Geometric branched covers between generalized manifolds, {\it Duke Math. J.}, {\bf 113} no.3, (2002), 465--529.

\bibitem{HMM}
A. Hinkkanen, G. J. Martin, V. Mayer, Local dynamics of uniformly quasiregular mappings, {\it Math. Scand.}, {\bf 95} (2004), no.1, 80-100.

\bibitem{Hudson} J. F. P. Hudson, {\it Piecewise Linear Topology}, Univ. Chicago Math. Lecture Notes, Benjamin, New York, 1969.

\bibitem{IM} T. Iwaniec, G. J. Martin, Quasiregular semigroups, {\it Ann. Acad. Sci. Fenn.}, {\bf 21} (1996), 241-254.

\bibitem{MarVai}
O. Martio, J. V\"ais\"al\"a,
Elliptic equations and maps of bounded length distortion,
{\it Math. Ann.} {\bf 282} (1988), no. 3, 423-443.

\bibitem{MP} G. J. Martin, K. Peltonen, Stoilow factorization for quasiregular mappings in all dimensions, {\it Proc. Amer. Math. Soc.} {\bf 138}, no. 1(2010),  147-151.

\bibitem{May}
V. Mayer, Uniformly quasiregular mappings of Latt\`es type,
{\it Conform. Geom. Dyn.}, {\bf 1} (1997), 104--111.

\bibitem{Munkres}
J. R. Munkres, {\it Elementary Differential Topology}, Ann. of Math. Stud. {\bf 54}, Princeton Univ. Press, Princeton, 1966.

\bibitem{PRW}
P. Pankka, K. Rajala, J.-M. Wu,
Quasiregular ellipticity of open and generalized manifolds, {\it
Computational
Methods and Function Theory -- Gehring Memorial Volume}, to appear.

\bibitem{Ric}
S. Rickman, {\it Quasiregular Mappings}, Ergebnisse der Mathematik und ihrer
Grenzgebiete (3) {\bf 26}, Springer, Berlin, 1993.

\bibitem{Rolfsen}
D. Rolfsen, {\it Knots and Links}, Publish and Perish, Berkeley, 1976  (AMS Chelsea Publishing, 2003).

\bibitem{Siebert}
H. Siebert, {\it Fixpunkte und normale Familien quasireg\"ularer Abbildungen}, Dissertation, CAU, Kiel, 2004; http://e-diss.uni-kiel.de/diss
1260.






\end{thebibliography}
\end{document}